\newtheorem {theorem} {Theorem} [section]
\newtheorem {lemma} {Lemma} [section]
\newtheorem {prop} {Proposition} [section]
\newtheorem{preremark}{Remark}[section]
  {\begin{preremark}\rm}{\end{preremark}}
   \newtheorem{preremark1}{Example}[section]
     \newtheorem{preremark2}{Definition}[section]
\newenvironment{defn}
  {\begin{preremark2}\rm}{\end{preremark2}}
\DeclareMathOperator{\divo}{div}
\begin{document}



\title{On a coupled PDE model for image restoration}

\author{V.B. Surya Prasath}


\author{Dmitry Vorotnikov}


\thanks{Research partially supported by CMUC and FCT (Portugal), through
European program COMPETE/FEDER.}

\keywords{global solvability, dissipative solution, imaging}



\begin{abstract} In this paper, we consider a new coupled PDE
model for image restoration. Both
the image and the edge variables are incorporated by coupling them into two
different PDEs. It is shown that the initial-boundary value problem has global in time dissipative solutions (in a sense going back to P.-L. Lions), and several properties of these solutions are established. This is a rough draft, and the final version of the paper will contain a modelling part and numerical experiments. 
\end{abstract}

\maketitle

\section {Introduction}

\newcommand{\sgn}{\text{sgn}}
\newcommand {\R} {\mathbb{R}}
\newcommand {\E} {\mathbf{E}}
\def\be{\begin{equation}}
\def\ee{\end{equation}}
\def\fr#1#2{\frac{\partial #1}{\partial #2}}

We consider the following problem

\begin{equation} \label{e1} \frac {\partial u(t,x)} {\partial t} = \divo (g(v(t,x))\nabla u(t,x)), \end{equation}
\begin{equation}  \label{e2} \frac {\partial v(t,x)} {\partial t}-\lambda(x)\Delta v(t,x)= (1-\lambda(x))(|\nabla u(t,x)|-v(t,x)), \end{equation}
\be \label{e3} u \Big | _ {\partial\Omega} =0, v \Big | _ {\partial\Omega} =0,\end{equation}
\begin{equation} \label{e6}  u | _ {t=0} = u_0, \ v | _ {t=0} = v_0.\end{equation} 

It is shown that the problem possesses global in time \emph{dissipative solutions}; uniqueness, regularity and some other properties of these solutions are studied.  The concept of dissipative solution was suggested in \cite{blions} for the Euler equations of ideal fluid flow, which are still not proven to have global weak solvability. Later, existence of dissipative solutions was established for Boltzmann's  equation \cite{lio1,vilg}, the ideal MHD equations \cite{anwu}, Navier-Stokes-Maxwell equations \cite{sra}, Euler-$\alpha$ and Maxwell-$\alpha$ models \cite{NON11} and viscoelastic diffusion equations \cite{diss1}.  

The features of our problem \eqref{e1}--\eqref{e6} which oppose strong and classical weak well-posedness are the presence of a nonlinear function (modulus) of the gradient of $u$ in the right-hand side of \eqref{e2} and the Perona-Malik-like form of $g$. The inequality \eqref{dss} in the definition of dissipative solutions turns out to contain the absolute value function as well. Therefore, unlike in the previous works on dissipative solutions,  it is impossible to pass to the limit in this inequality via weak and weak-* compactness argument. Nevertheless, we manage to do it via strong compactness, although it is not sufficiently strong to obtain classical (i.e. not dissipative) weak solutions. 

\section {Well-posedness of the problem}

The objective of this section is to prove Theorem \ref{mais} concerning existence, uniqueness, regularity and some other properties of dissipative solutions to the problem \eqref{e1}--\eqref{e6}. We consider the simplest Dirichlet boundary condition \eqref{e3}, but other boundary conditions can also be handled. 

In the section, $\Omega$ is considered to be a domain (i.e. an open set in $\R^2$) possessing the \emph{cone property}. We recall \cite{adams} that this means that each point $x\in\Omega$ is a vertex of a \emph{finite cone} $C_x$ contained in $\Omega$, and all these cones $C_x$ are congruent. A finite cone is a set of the form $$C_x=B_1\cap \{x+\xi (y-x)|y\in B_2, \xi >0 \}$$ where $B_1$ and $B_2 $ are open balls in $\R^2$, $B_1$ is centered at $x$, and $B_2$ does not contain $x$.
Obviously, rectangular domains have this property. 

The symbol $C$ will stand for a generic positive constant that can take different values in different lines. We sometimes write $C(\dots)$ to specify that the constant depends on a certain parameter or value.

We assume that $g:\R \to \R$, $\frac 1 {\sqrt g}$ and $\lambda:\Omega \to \R$ are Lipschitz functions having positive values, $g$ is bounded, $\lambda\leq 1$, \be\lambda_0=\inf_{x\in\Omega}\lambda(x)>0.\ee

The assumptions on $g$ hold, for instance, if \be g(s)=\frac a {b+c|s|^d},\ee where $a,b,c,d$ are positive numbers, and $1\leq d\leq 2$. 

Note that \be\label{estg}\frac 1 {\sqrt {g(s)}}\leq \Big| \frac 1 {\sqrt {g(s)}}- \frac 1 {\sqrt {g(0)}} \Big| + \frac 1 {\sqrt {g(0)}}\leq C(g)(1+|s|).\ee

We use the standard notations $L_p (\Omega) $, $W_p^{m}
(\Omega) $, $H^{m} (\Omega) =$ $W_2^{m} (\Omega) $ for the Lebesgue
and Sobolev spaces. We will often keep the function space symbol and omit $\Omega$.

The Euclidean norm in finite-dimensional spaces is denoted by $ | \cdot | $. The symbol $ \| \cdot \| $ will stand for the Euclidean norm in $L_2(\Omega)$. The corresponding scalar products is denoted by a dot $\cdot$ and parentheses $(\cdot,\cdot)$. 

Let $H_0^1(\Omega)$ be the closure of the set of smooth, compactly supported in $\Omega$, functions in $H^1(\Omega)$. By virtue of Friedrichs' inequality, the Euclidean norm $\|\cdot\|_1$ corresponding to the scalar product
$$(u,v)_1=(\nabla u,\nabla v)$$ is a norm in $H_0^1$. 

The set $V_2=H_0^1(\Omega)\cap H^2(\Omega)$ is a Hilbert space with the scalar product
$$(u,v)_2=(u,v)_1+\sum_{|\alpha|=2}(D^\alpha u, D^\alpha v).$$ Denote the corresponding Euclidean norm by $\|\cdot\|_2$.

Let $V_r$, $1<r<2$, be the closure of $V_2$ in $W^1_r$.  
 
We recall the following abstract observation \cite{temam, book}. Assume that we have two Hilbert spaces, $X\subset Y,$ with
continuous embedding operator $i:X\to Y$, and $i(X)$ is dense in
$Y$. The adjoint operator $i^*:Y^*\to X^*$ is continuous and,
since $i(X)$ is dense in $Y$, one-to-one. Since $i$ is one-to-one,
$i^*(Y^*)$ is dense in $X^*$, and one may identify $Y^*$ with a
dense subspace of $X^*$. Due to the Riesz representation theorem,
one may also identify $Y$ with $Y^*$. We arrive at the chain of
inclusions:
\begin{equation} X\subset Y\equiv Y^*\subset X^*.\end{equation}
Both embeddings here are dense and continuous. Observe that in
this situation, for $f\in Y, u\in X$, their scalar product in $Y$
coincides with the value of the functional $f$ from $X^*$ on the
element $u\in X$:
\begin{equation} \label{ttt} (f,u)_Y=\langle f,u \rangle. \end{equation}
Such triples $(X,Y, X^*)$ are called Lions triples. 
We use the Lions triples $(V_2,L_2,V_2^*)$ and $(H^1_0,L_2,H^{-1})$. 

The symbols $C (\mathcal{J}; E) $, $C_w (\mathcal{J}; E) $, $L_2
(\mathcal{J}; E) $ etc. denote the spaces of continu\-ous, weakly
continuous, quadratically integrable etc. functions on an interval
$\mathcal{J}\subset \mathbb {R} $ with
values in a Banach space $E $. We recall that a function $u:
\mathcal{J} \rightarrow E$ is \textit{weakly continuous} if for
any linear continuous functional $g$ on $E$ the function  $g(
u(\cdot)): \mathcal{J}\to \mathbb{R}$ is continuous.

We require the following spaces
$$ W_1= W_1(\Omega, T) = \{\tau\in L_2 (0, T; V_2), \
\tau' \in L_2 (0, T;
 {V_2^{*}}) \},$$ $$\|\tau\|_{W_1}=\|\tau\|_{L_2 (0, T; V_2)}+\|\tau'\|_{L_2 (0, T;
 {V_2^{*}}) },$$
 $$ W_2= W_2(\Omega, T) = \{\tau\in L_2 (0, T; H^1_0), \
\tau' \in L_2 (0, T;
 H ^ {-1}) \},$$ $$\|\tau\|_{W_2}=\|\tau\|_{L_2 (0, T; H^1_0)}+\|\tau'\|_{L_2 (0, T; H ^ {-1})}.$$ 
 

Let us introduce the operator
 $$A:V_2\to V^{*}_2,\  \langle A u , \varphi \rangle= (u,\varphi)_2,$$ where $\varphi$ is an arbitrary element of $V_2$. 
 
Denote by $\mathcal{R}$ the following class of pairs of functions: 
$$\mathcal{R}=L_{4,loc} (0, \infty; V_2)\cap L_\infty(0,\infty; W^1_\infty)\cap W^1_{4,loc}(0,\infty;L_2)$$ $$\times  L_{2,loc} (0, \infty; V_2)\cap L_\infty(0,\infty;L_\infty)\cap W^1_{2,loc}(0,\infty;L_2).$$

Observe that the following expressions, where $\delta$ is a positive number, are well-defined for $(w,\tau)\in \mathcal{R}$, and their values are in $L_{2,loc}(0,\infty;L_2)$: 
 \be E_1(w,\tau,\delta)=-\frac {\partial w} {\partial t} + \delta\divo (g(\tau)\nabla w),\ee 
 \be E_2(w,\tau,\delta)= -\frac {\partial \tau} {\partial t}+\lambda\Delta \tau+ \delta(1-\lambda)(|\nabla w|-\tau)+(1-\delta)(\nabla \tau\cdot \nabla \lambda), \ee
  \be E_1(w,\tau)=E_1(w,\tau,1),\ee 
 \be E_2(w,\tau)=E_2(w,\tau,1). \ee

Let us recall the Sobolev inequality
  \begin {equation} \label{inq1} \|u\|_{L_\infty}\leq C(\Omega) \| u \| _2 ,\ u\in
V_2,
\end {equation}
and the Ladyzhenskaya inequality \cite{book}
\be\label{lady} \|u^2\|\leq \sqrt{2} \| u \|\, \| \nabla u \|,\ u\in
H^1_0.\ee

The following Gronwall-like lemma will be useful.
\begin{lemma} \label{ineq} (\cite[Lemma 3.1]{diss1}) Let $f,\chi, L, M: [0,T]\to \R$ be scalar functions, $\chi, L,M \in L_1(0,T),$ and $f\in
W^1_1(0,T)$ (i.e. $f$ is absolutely continuous). If \be \chi(t)
\geq 0, L(t) \geq 0\ee  and \be f'(t)+ \chi(t)\leq L(t)f(t)+M(t)
\ee for almost all $t\in (0,T)$, then \be f(t)+ \int\limits_0^t
\chi(s)\, ds\leq $$ $$ \exp\left(\int\limits_0^t L(s) ds\right)\left[f(0)+
\int\limits_0^t \exp\left(\int\limits_s^0 L(\xi) d\xi\right) M(s)\, ds
\right]\ee for all $t\in [0,T]$.
\end{lemma}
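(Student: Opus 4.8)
The plan is to prove this by the classical integrating-factor method, taking care to \emph{keep} — rather than discard — the dissipation integral $\int_0^t\chi$. First I would introduce the integrating factor
$$\mu(t)=\exp\Big(-\int_0^t L(s)\,ds\Big),$$
which, since $L\in L_1(0,T)$ and $L\geq 0$, is absolutely continuous, positive, non-increasing on $[0,T]$, and satisfies $\mu'(t)=-L(t)\mu(t)$ for almost all $t$.

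Next I would consider $h(t)=\mu(t)f(t)$. As a product of two absolutely continuous functions on a bounded interval, $h$ is itself absolutely continuous, and the a.e. Leibniz rule gives $h'(t)=\mu(t)\big(f'(t)-L(t)f(t)\big)$. Substituting the hypothesis $f'(t)-L(t)f(t)\leq M(t)-\chi(t)$ and using $\mu(t)>0$ yields $h'(t)\leq \mu(t)\big(M(t)-\chi(t)\big)$ a.e.; integrating on $[0,t]$ and recalling $\mu(0)=1$ gives
$$\mu(t)f(t)\leq f(0)+\int_0^t\mu(s)M(s)\,ds-\int_0^t\mu(s)\chi(s)\,ds.$$
Since $\mu$ is non-increasing, $\mu(s)\geq\mu(t)$ for $s\leq t$, and as $\chi\geq 0$ this forces $\int_0^t\mu(s)\chi(s)\,ds\geq \mu(t)\int_0^t\chi(s)\,ds$. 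Moving that term to the left-hand side and dividing by $\mu(t)>0$ — equivalently, multiplying through by $\exp\big(\int_0^t L(s)\,ds\big)$ — produces precisely the asserted estimate, upon observing that $\mu(s)=\exp\big(\int_s^0 L(\xi)\,d\xi\big)$.

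I do not expect a genuine obstacle here: the argument is elementary. The only points that deserve a line of justification are the absolute continuity of the product $\mu f$ together with the validity of the product rule almost everywhere (both standard facts about absolutely continuous functions on a bounded interval), and the sign bookkeeping in the last step, where the non-negativity of $\chi$ and the monotonicity of $\mu$ are exactly what allow the integral $\int_0^t\chi$ to be transferred to the left-hand side rather than merely being thrown away.
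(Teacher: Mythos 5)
Your proof is correct and is the standard integrating-factor argument for this Gronwall variant; the key point — using the monotonicity of $\mu$ and the sign of $\chi$ to retain the term $\int_0^t\chi$ rather than discard it — is handled properly, and the measure-theoretic details (absolute continuity of $\mu f$, a.e.\ product rule, integrability of $\mu(M-\chi)$) are all in order. Note that the paper itself gives no proof, citing the lemma from an external reference, so your argument simply supplies the expected standard derivation.
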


We can now give


\begin{defn} \label{maindef} Let $u_0,v_0\in L_2(\Omega)$. A pair of
functions $(u,v)$ from the class \be u,v\in C_w([0,\infty); L_2),\ee is called a {\it dissipative} solution
to problem \eqref{e1} -- \eqref{e6} if, 
for all test 
functions
$(\zeta,\theta)\in \mathcal{R}$ and all non-negative
moments of time $t$,
one has \be \label{dss} \gamma^{\|u(t)\|^2}\left[\|u(t)-\zeta(t)\|^2 +
\|v(t)-\theta(t)\|^2\right] $$

$$\leq
\gamma^{2t+\|u_0\|^2}\Big\{ 
\|u_0 -\zeta(0)\|^2 +
\|v_0-\theta(0)\|^2 $$ $$+
\int\limits_0^t 2\gamma^{-s}
\Big|\big( E_{1}(\zeta,\theta)(s),u(s)-\zeta(s)\big)+ \big( E_{2}(\zeta,\theta)(s), v(s)-\theta(s)\big)\Big| \Big\}
\ee where $\gamma$= $\gamma(\Omega, g, \lambda, \zeta,\theta)> 1$ is a certain function of $\Omega, g, \lambda, \zeta$ and $\theta$. \end{defn}
 

\begin{theorem} \label{mais} a) Given $u_0, v_0\in L_2$, there is a
dissipative solution to problem \eqref{e1} -- \eqref{e6}.

b) This solution $(u,v)$ belongs to $L_{4/3,loc}(0,\infty;V_{-\epsilon+4/3})\times L_{2,loc}(0,\infty; H^1_0)$, $\ 0<\epsilon<\frac 13$. 

c) If, for some $u_0,v_0\in L_2$, there exist $T>0$
and a strong solution $(u_T,v_T)$ to problem \eqref{e1} -- \eqref{e6}, which is a restriction of a function from $\mathcal{R}$ to $(0,T)$. Then the restriction
of any dissipative solution (with the same initial data) to
$(0,T)$ coincides with $(u_T,v_T)$.

d) Every strong solution $(u,v)\in \mathcal{R}$ is a (unique) dissipative solution.

e) The dissipative solutions satisfy the initial condition \eqref{e6}.

\end{theorem}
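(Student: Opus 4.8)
\textbf{Proof strategy for Theorem \ref{mais}.}

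The plan is to build the dissipative solution by a vanishing-viscosity / Galerkin scheme and extract limits using the energy-type inequality that the dissipative formulation is designed to survive. First I would regularize equation \eqref{e1}: replace it by $\partial_t u^\varepsilon = \divo(g(v^\varepsilon)\nabla u^\varepsilon) + \varepsilon\Delta^2 u^\varepsilon$ (or add $\varepsilon\Delta u^\varepsilon$ with an extra truncation of $g$), keeping \eqref{e2}--\eqref{e6} unchanged, so that the coupled system becomes a parabolic system with enough smoothing to solve by Galerkin in $V_2\times H^1_0$. The a priori estimates come from testing \eqref{e1} with $u^\varepsilon$ and \eqref{e2} with $v^\varepsilon$: since $g$ is bounded below and $\lambda\ge\lambda_0>0$, one gets $u^\varepsilon$ bounded in $L_\infty(0,T;L_2)\cap L_2(0,T;H^1_0)$ (using $\sqrt g\,\nabla u^\varepsilon\in L_2$ and \eqref{estg}) and $v^\varepsilon$ bounded in $L_\infty(0,T;L_2)\cap L_2(0,T;H^1_0)$, with $\varepsilon$-uniform bounds, plus the $\varepsilon$-term $\sqrt\varepsilon\,\Delta u^\varepsilon$ bounded in $L_2$. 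The bound on $|\nabla u^\varepsilon|$ in $L_2$ feeds the right-hand side of \eqref{e2}, closing the estimates. From the equations themselves one then reads off $\partial_t v^\varepsilon$ bounded in $L_2(0,T;H^{-1})$ and $\partial_t u^\varepsilon$ bounded in $L_{4/3}(0,T;V_{4/3-\epsilon}^*)$-type spaces, which, via Aubin--Lions, yields \emph{strong} convergence $v^\varepsilon\to v$ in $L_2(0,T;L_2)$ and $u^\varepsilon\to u$ in $L_2(0,T;L_2)$, and a.e. convergence of $\nabla u^\varepsilon$ is \emph{not} expected — this is exactly the Perona--Malik obstruction.

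Next I would derive the dissipative inequality at the regularized level. Fix a test pair $(\zeta,\theta)\in\mathcal R$. Compute $\frac{d}{dt}\|u^\varepsilon-\zeta\|^2$ and $\frac{d}{dt}\|v^\varepsilon-\theta\|^2$ using the equations for $u^\varepsilon,v^\varepsilon$ and the defining identities for $E_1(\zeta,\theta),E_2(\zeta,\theta)$. The cross terms $(\divo(g(v^\varepsilon)\nabla u^\varepsilon)-\divo(g(\theta)\nabla\zeta),u^\varepsilon-\zeta)$ and the $|\nabla u^\varepsilon|-|\nabla\zeta|$ term must be handled: the monotone-in-$\nabla u$ part $-(g(v^\varepsilon)\nabla(u^\varepsilon-\zeta),\nabla(u^\varepsilon-\zeta))\le 0$ is dissipative and dropped; the remaining terms, involving $g(v^\varepsilon)-g(\theta)$ times $\nabla\zeta$, and $\big||\nabla u^\varepsilon|-|\nabla\zeta|\big|\le|\nabla(u^\varepsilon-\zeta)|$, are bounded by $L(t)(\|u^\varepsilon-\zeta\|^2+\|v^\varepsilon-\theta\|^2)$ plus the $E_i$ pairing terms, where $L(t)$ depends on $\|\zeta(t)\|_2,\|\theta(t)\|_\infty$ etc. (this is where the Lipschitz bound on $g$, \eqref{estg}, \eqref{inq1} and \eqref{lady} enter, and where the exponent $\gamma$ and the factor $\gamma^{2t+\|u_0\|^2}$ originate — absorbing $\|u(t)\|^2$ via $\gamma^{\|u(t)\|^2}$ is the standard trick to make the Gronwall multiplier a clean exponential). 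Apply Lemma \ref{ineq}. The $\varepsilon\Delta^2$ term contributes $-\varepsilon\|\Delta(u^\varepsilon-\zeta)\|^2\le 0$ plus $\varepsilon(\Delta^2\zeta,\cdot)$ which vanishes as $\varepsilon\to0$. Then pass to the limit: the left side $\gamma^{\|u^\varepsilon(t)\|^2}[\cdots]$ is lower semicontinuous under the strong $L_2$ convergence (at a.e. $t$, hence after choosing a good time), and the right side converges because all the pairings $(E_i(\zeta,\theta),u^\varepsilon-\zeta)$ and $(E_i,v^\varepsilon-\theta)$ are linear in the strongly convergent quantities — crucially the absolute value $\big|(\cdot)+(\cdot)\big|$ is continuous, so strong convergence suffices even though weak-* would not. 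This proves (a). Part (b) follows by interpolation from the uniform bounds already obtained: $u\in L_\infty(0,T;L_2)\cap L_2(0,T;H^1_0)$ gives, together with the equation, the stated $L_{4/3}(0,\infty;V_{4/3-\epsilon})$-type regularity, and $v\in L_2(0,\infty;H^1_0)$ is immediate.

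For (c) and (d) I would use the dissipative inequality itself. Given a strong solution $(u_T,v_T)$ that is the restriction of a function in $\mathcal R$, take $(\zeta,\theta)=(u_T,v_T)$ as the test pair on $(0,T)$: then $E_1(\zeta,\theta)=E_2(\zeta,\theta)=0$ by definition of strong solution, so the right-hand side of \eqref{dss} collapses to $\gamma^{2t+\|u_0\|^2}[\|u_0-u_T(0)\|^2+\|v_0-v_T(0)\|^2]=0$ since the initial data agree, forcing $\|u(t)-u_T(t)\|^2+\|v(t)-v_T(t)\|^2=0$ for all $t\in[0,T]$; this is (c). For (d), conversely, one checks directly that a strong solution $(u,v)\in\mathcal R$ satisfies \eqref{dss} for every test pair: repeat the computation of $\frac{d}{dt}[\|u-\zeta\|^2+\|v-\theta\|^2]$ with $u,v$ exact solutions, drop the nonpositive dissipative term, apply Lemma \ref{ineq}, and uniqueness among dissipative solutions follows from (c) with $T$ arbitrary. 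Part (e) is the statement that $u(t)\to u_0$, $v(t)\to v_0$ in the relevant sense as $t\to0^+$: since $u,v\in C_w([0,\infty);L_2)$ the weak limits exist, and plugging $t\to0$ into \eqref{dss} with a fixed test pair gives $\gamma^{\|u(0)\|^2}[\|u(0)-\zeta(0)\|^2+\|v(0)-\theta(0)\|^2]\le\gamma^{\|u_0\|^2}[\|u_0-\zeta(0)\|^2+\|v_0-\theta(0)\|^2]$; choosing $\zeta(0),\theta(0)$ appropriately (e.g. near $u_0,v_0$) and using weak lower semicontinuity of the norm forces $u(0)=u_0$, $v(0)=v_0$, with strong continuity at $t=0$ extracted from the same inequality by letting the test data approach the initial data.

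\textbf{Main obstacle.} The crux is the passage to the limit in \eqref{dss}: because the inequality contains $|\nabla\zeta|$ and, implicitly through $E_2$, no term that is weakly continuous in $\nabla u^\varepsilon$, one cannot use the usual weak/weak-* compactness argument of earlier dissipative-solution papers. Everything hinges on getting \emph{strong} $L_2(0,T;L_2)$ compactness of both $u^\varepsilon$ and $v^\varepsilon$ (not their gradients) from Aubin--Lions with the right time-derivative bounds, and on the observation that the test-pair class $\mathcal R$ is chosen precisely so that $E_1(\zeta,\theta),E_2(\zeta,\theta)\in L_{2,loc}(0,\infty;L_2)$, making each pairing in the right-hand side of \eqref{dss} a continuous functional of the strongly convergent sequences $u^\varepsilon-\zeta$, $v^\varepsilon-\theta$. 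Getting the time-derivative estimate for $u^\varepsilon$ sharp enough (the $V_{4/3-\epsilon}^*$ scale) while keeping it $\varepsilon$-uniform, and verifying the lower semicontinuity of the left-hand side at a full-measure set of times that can be upgraded to all $t$ using weak continuity in $L_2$, are the delicate points.
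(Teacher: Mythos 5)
Your overall architecture matches the paper's: regularize, prove the dissipative inequality at the regularized level via a Gronwall argument whose multiplier is controlled by the energy identity (whence the $\gamma^{\|u(t)\|^2}$ and $\gamma^{2t}$ factors), obtain $\varepsilon$-uniform estimates, and pass to the limit by \emph{strong} compactness because the absolute value in \eqref{dss} defeats the usual weak/weak-* argument; parts c), d), e) by choosing test pairs are also as in the paper. However, there is one genuine error at the foundation of your a priori estimates: you assert that ``$g$ is bounded below'' and conclude that $u^\varepsilon$ is bounded in $L_2(0,T;H^1_0)$ uniformly in $\varepsilon$. Under the paper's hypotheses $g$ is \emph{not} bounded below by a positive constant: only $g>0$ pointwise, $g$ bounded, and $1/\sqrt{g}$ Lipschitz are assumed, so $g(s)$ may decay like $|s|^{-2}$ (the Perona--Malik example $g(s)=a/(b+c|s|^d)$, $d\le 2$, is exactly the intended case, and \eqref{estg} — which you yourself cite — records that $1/\sqrt{g}$ grows linearly). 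The energy identity only gives $\|\sqrt{g(v)}\,\nabla u\|_{L_2(0,T;L_2)}\le C$; combined with $1/\sqrt{g(v)}\le C(1+|v|)$ and the $L_\infty(L_2)$, $L_2(H^1_0)$ bounds on $v$, this yields only $\nabla u\in L_2(0,T;L_1)\cap L_1(0,T;L_r)\cap L_{4/3}(0,T;L_{4/3-\epsilon})$ uniformly, while $\|u\|_{L_2(0,T;V_2)}$ blows up like $\varepsilon^{-1/2}$.

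This gap propagates into two of your subsequent steps. First, ``the bound on $|\nabla u^\varepsilon|$ in $L_2$ feeds the right-hand side of \eqref{e2}'' and hence ``$\partial_t v^\varepsilon$ bounded in $L_2(0,T;H^{-1})$'': with only $\nabla u\in L_2(0,T;L_1)$ the term $((1-\lambda)|\nabla u|,\phi)$ must be paired against $\|\phi\|_{L_\infty}\le C\|\phi\|_2$, giving a uniform bound for $v'$ only in $L_2(0,T;H^{-2})$ (the $H^{-1}$ bound costs a factor $1/\sqrt{\varepsilon}$); the strong compactness $v^\varepsilon\to v$ in $L_2(L_2)$ still follows from Aubin--Lions--Simon with the $H^{-2}$ target, but you need to say so. Second, the strong compactness of $u^\varepsilon$ itself must be extracted from the weaker gradient bound, giving $u^\varepsilon\to u$ only in $L_{4/3}(0,T;L_2)$ (via the compact embedding $W^1_{4/3-\epsilon}\subset L_2$), not in $L_2(0,T;L_2)$; this is still enough to pass to the limit in \eqref{dss} once one checks (as the paper does via a Krasnoselskii-type continuity result) that $\gamma^{\|u_m(t)\|^2}$ and the squared norms converge, using the uniform $L_\infty(0,T;L_2)$ bound. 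With these corrections your scheme closes; the remaining differences from the paper (Schaeffer fixed point for the regularized problem rather than Galerkin, and verifying d) directly rather than deducing it from a), c), e)) are inessential.
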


To prove Theorem \ref{mais}, we consider the following auxiliary problem:

\begin{equation} \label{au1s} \frac {\partial u} {\partial t}+ \varepsilon  A u = \delta\divo (g(v)\nabla u), \end{equation}
\begin{equation}  \label{au2s} \frac {\partial v} {\partial t}-\lambda\Delta v= \delta(1-\lambda)(|\nabla u|-v)+(1-\delta)(\nabla v\cdot \nabla \lambda), \end{equation}
\be \label{au3s} u \Big | _ {\partial\Omega} =0, v \Big | _ {\partial\Omega} =0,\end{equation}
\begin{equation} \label{au6s}  u | _ {t=0} =\delta u_0, \ v | _ {t=0} =\delta v_0.\end{equation} 

Here, $\varepsilon>0$ and $0\leq \delta \leq 1$ are parameters.

The weak formulation of \eqref{au1s} -- \eqref{au6s} is as follows. 

\begin{defn} A pair of
functions $(u,v)$ from the class \be u \in W_1,\ v\in
W_2\ee is a {\it weak} solution to problem \eqref{au1s} -- \eqref{au6s} if the equalities

   \be\label{sw1}\frac {d} {d t} (u, \varphi)   +  \varepsilon (u,\varphi)_2 + \delta(g(v)\nabla u, \nabla \varphi) = 0,
   \end{equation}
   and
    \be\label{sw2}\frac {d} {d t} (v, \phi) +  ({\lambda}\nabla v, \nabla \phi)$$ $$+ \delta(\nabla v, \phi \nabla {\lambda}) - \delta\left((1-\lambda)(|\nabla u|-v), \phi\right)=0 \end{equation}   are satisfied  for all
 $ \varphi\in V_2, \ \phi\in H^1_0$ almost everywhere in $(0, T)$, and \eqref{au3s} and \eqref{au6s} hold. \end{defn}

 \begin{lemma} \label{diins} Let $(u,v)$ be a weak solution to problem \eqref{au1s} -- \eqref{au6s}. Then, for all 
 test functions $(\zeta,\theta)\in \mathcal{R}$ and $0\leq t\leq T$,
one has \be \label{disdels} \gamma^{\|u(t)\|^2}\big\{\|u(t)-\zeta(t)\|^2 +
\|v(t)-\theta(t)\|^2 
$$ $$+ 2\varepsilon\int\limits_0^t
\|u(s)-\zeta(s)\|_2^2\, ds+\lambda_0\int\limits_0^t\|v(s)-\theta(s)\|_1^2\, ds\big\}$$ $$\leq
\gamma^{2t+\delta\|u_0\|^2}\Big\{ 
\|\delta u_0 -\zeta(0)\|^2 +
\|\delta v_0-\theta(0)\|^2 $$ $$+
\int\limits_0^t 2\gamma^{-s}
\Big|\big( E_{1}(\zeta,\theta,\delta)(s),u(s)-\zeta(s)\big)$$
$$+  \big( E_{2}(\zeta,\theta,\delta)(s), v(s)-\theta(s)\big)-\varepsilon (\zeta(s),u(s)-\zeta(s))_2\Big| \, ds \Big\}
\ee where $\gamma$= $\gamma(\Omega, g, \lambda, \zeta,\theta)> 1$ is a certain function of $\Omega, g, \lambda, \zeta$ and $\theta$. \end{lemma}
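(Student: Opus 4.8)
The plan is to derive the energy inequality \eqref{disdels} by testing the two weak equations \eqref{sw1}--\eqref{sw2} against $u-\zeta$ and $v-\theta$ respectively, and then absorbing all ``bad'' terms into a Gronwall argument based on Lemma \ref{ineq}. Concretely, I would first rewrite the equations for the differences. Subtracting $\partial_t\zeta$ and $-\lambda\Delta\zeta+\dots$ from \eqref{au1s}--\eqref{au2s} (interpreted weakly), one gets for $w:=u-\zeta$ and $\rho:=v-\theta$ evolution equations whose right-hand sides contain: (i) the diffusion terms $\varepsilon A w$ and $\lambda\Delta\rho$, which after pairing with $w$ and $\rho$ produce the good coercive terms $2\varepsilon\|w\|_2^2$ and (using $\lambda\ge\lambda_0$ and integration by parts) at least $\lambda_0\|\rho\|_1^2$ plus a lower-order term $(\nabla\rho,\rho\nabla\lambda)$ controlled by $\|\nabla\lambda\|_\infty\|\rho\|\,\|\rho\|_1$; (ii) the residual terms $(E_1(\zeta,\theta,\delta),w)$, $(E_2(\zeta,\theta,\delta),\rho)$, and $-\varepsilon(\zeta,w)_2$, which are exactly the quantities appearing under the absolute value on the right of \eqref{disdels}; and (iii) genuinely nonlinear commutator-type terms coming from $g(v)\nabla u - g(\theta)\nabla\zeta$ and from $(1-\lambda)(|\nabla u|-|\nabla\zeta|)$.

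The heart of the matter is step (iii). For the first PDE, I would write
$$
\bigl(g(v)\nabla u-g(\theta)\nabla\zeta,\nabla w\bigr)
=\bigl(g(v)\nabla w,\nabla w\bigr)+\bigl((g(v)-g(\theta))\nabla\zeta,\nabla w\bigr).
$$
The first piece is $\ge 0$ but of the ``wrong'' sign to help (it multiplies $\nabla w$, not $w$), yet it is non-negative and may simply be kept or discarded; the delicate one is the second, which is bounded by $\mathrm{Lip}(g)\,\|\rho\|\,\|\nabla\zeta\|_\infty\,\|\nabla w\|$. Here $\|\nabla w\|$ is not controlled by $\|w\|$ in the first equation unless $\varepsilon>0$, which is precisely why the $\varepsilon A u$ regularization is present: one uses $\|\nabla w\|\le\|w\|_2$ and Young's inequality $\mathrm{Lip}(g)\|\rho\|\|\nabla\zeta\|_\infty\|w\|_2\le\varepsilon\|w\|_2^2+C(\varepsilon,g,\zeta)\|\rho\|^2$, feeding $\|\rho\|^2$ into the Gronwall coefficient $L$. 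For the second PDE, the term $\delta(1-\lambda)(|\nabla u|-|\nabla\zeta|)$ paired with $\rho$ is estimated by the reverse triangle inequality $\bigl||\nabla u|-|\nabla\zeta|\bigr|\le|\nabla w|$, giving $\le\|1-\lambda\|_\infty\|\nabla w\|\,\|\rho\|$, again of the form $\le\varepsilon\|w\|_2^2+C\|\rho\|^2$. Also $\delta(1-\lambda)(v-\theta)$ against $\rho$ contributes $\le\|\rho\|^2$ directly.

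Having collected everything, the differential inequality reads
$$
\tfrac{d}{dt}\bigl(\|w\|^2+\|\rho\|^2\bigr)+2\varepsilon\|w\|_2^2+\lambda_0\|\rho\|_1^2
\le L(t)\bigl(\|w\|^2+\|\rho\|^2\bigr)+M(t),
$$
where $L(t)=C\bigl(1+\|\nabla\zeta\|_\infty^2+\|\nabla\lambda\|_\infty\bigr)$ is (locally) integrable and $M(t)=2\bigl|(E_1(\zeta,\theta,\delta),w)+(E_2(\zeta,\theta,\delta),\rho)-\varepsilon(\zeta,w)_2\bigr|$; note I must also reabsorb $L\|w\|^2$ against $2t$ in the exponent. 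At this point one does \emph{not} apply Lemma \ref{ineq} to $f=\|w\|^2+\|\rho\|^2$ directly, because the target inequality has the weight $\gamma^{\|u(t)\|^2}$ on the left and $\gamma^{2t+\delta\|u_0\|^2}$ on the right. The trick is to choose $\gamma=e^{\kappa}$ with $\kappa$ large enough (depending on the constants in $L$, hence on $\Omega,g,\lambda,\zeta,\theta$) and to run Gronwall on the modified quantity $\gamma^{-2t}f(t)$, or equivalently to note $2t\ge\int_0^t L$ once $\kappa$ dominates $\sup L/2$; the factor $\gamma^{\|u(t)\|^2}$ versus $\gamma^{\delta\|u_0\|^2}$ is handled by additionally running the standard energy estimate for $u$ alone — testing \eqref{sw1} with $u$ gives $\tfrac{d}{dt}\|u\|^2\le 0$ after dropping $2\varepsilon\|u\|_2^2$ and $2\delta(g(v)\nabla u,\nabla u)\ge0$, so $\|u(t)\|^2\le\delta^2\|u_0\|^2\le\delta\|u_0\|^2$ is monotone, and thus $\gamma^{\|u(t)\|^2}$ is a legitimate (bounded, decreasing) multiplier that can be moved across the inequality, with the slack absorbed into the choice of $\kappa$. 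Combining the weighted Gronwall output with these observations and discarding the non-negative coercive integrals only where they are not needed yields exactly \eqref{disdels}.

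The main obstacle I anticipate is bookkeeping the dependence of $\gamma$ and ensuring every nonlinear term is dominated by the available dissipation $2\varepsilon\|w\|_2^2+\lambda_0\|\rho\|_1^2$ \emph{with room to spare} — in particular that the coefficient $L(t)$ genuinely lands in $L_1(0,T)$ given only $(\zeta,\theta)\in\mathcal R$ (which supplies $\zeta\in L_\infty(0,\infty;W^1_\infty)$, so $\|\nabla\zeta\|_\infty\in L_\infty$, hence $L\in L_\infty\subset L_1$), and that the $\varepsilon(\zeta,w)_2$ term, which cannot be absorbed (it would cost $\varepsilon\|w\|_2^2$ that is already committed), is correctly left inside $M(t)$ rather than $L(t)$. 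Everything else is a careful but routine application of Young's inequality, the reverse triangle inequality for $|\cdot|$, Friedrichs' inequality, and Lemma \ref{ineq}.
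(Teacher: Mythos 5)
Your overall architecture (test the difference equations with $w=u-\zeta$, $\varsigma=v-\theta$, collect the $E_1,E_2$ residuals, run a Gronwall argument) matches the paper, but the way you handle the two critical nonlinear terms breaks the lemma. You bound both the commutator $([g(v)-g(\theta)]\nabla\zeta,\nabla w)$ and the term $((1-\lambda)(|\nabla u|-|\nabla\zeta|),\varsigma)$ by $C\|\varsigma\|\,\|\nabla w\|$ and then absorb $\|\nabla w\|$ into the artificial dissipation $\varepsilon\|w\|_2^2$ via Young's inequality. That Young step costs a factor $C/\varepsilon$ on $\|\varsigma\|^2$, which enters the Gronwall coefficient $L(t)$ and hence the exponent: you end up with $\gamma\sim e^{C/\varepsilon}$. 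This contradicts the statement being proved ($\gamma=\gamma(\Omega,g,\lambda,\zeta,\theta)$, independent of $\varepsilon$ and $\delta$) and would make the estimate useless for its only purpose, namely passing to the limit $\varepsilon_m\to 0$ in the proof of Theorem \ref{mais}. Moreover, the term you propose to ``simply keep or discard,'' namely $\delta(g(v)\nabla w,\nabla w)\ge 0$ on the left-hand side, is precisely the dissipation that must do the absorbing; discarding it is the root of the problem.

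The paper's mechanism is different and $\varepsilon$-free. Both bad terms are first merged into $C(\zeta,g)\,\delta(|\varsigma|,|\nabla w|)$, then rewritten as $C\bigl(|\varsigma|/\sqrt{g(v)},\sqrt{\delta g(v)}|\nabla w|\bigr)$ so that Young's inequality yields $\|\sqrt{\delta g(v)}\nabla w\|^2$, which is exactly cancelled by the retained term $\delta(g(v)\nabla w,\nabla w)$. The remaining factor $1/\sqrt{g(v)}$ is split using the standing hypothesis that $1/\sqrt{g}$ is Lipschitz (this is why that hypothesis is in the paper): $1/\sqrt{g(v)}\le 1/\sqrt{g(0)}+C|\theta|+C|\varsigma|$, with $|\theta|$ controlled by $\theta\in L_\infty(L_\infty)$. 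This leaves a genuinely quadratic term $(\varsigma^2,\sqrt{\delta g(v)}|\nabla u|)$, which is handled by the Ladyzhenskaya inequality \eqref{lady}: it is bounded by $\frac{\lambda_0}{2}\|\nabla\varsigma\|^2+C\Phi^2\|\varsigma\|^2$ with $\Phi(t)=\|1+\sqrt{\delta g(v(t))}|\nabla u(t)|\|$, the gradient part being eaten by $\lambda_0\|\varsigma\|_1^2$. Finally, the specific weights in \eqref{disdels} are not a matter of choosing $\kappa$ large: the energy identity \eqref{ee2} gives $\int_0^t\Phi^2\le 2t|\Omega|+\delta\|u_0\|^2-\|u(t)\|^2$ and $\int_0^t\Phi^2\ge t|\Omega|$, and it is exactly this two-sided bound, fed into Lemma \ref{ineq}, that produces $\gamma^{\|u(t)\|^2}$ on the left and $\gamma^{2t+\delta\|u_0\|^2}$ and $\gamma^{-s}$ on the right. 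None of these four ingredients (retaining the $g(v)$-dissipation, the $1/\sqrt{g}$ Lipschitz splitting, Ladyzhenskaya, and the $\Phi^2$ energy bound) appears in your sketch, and without them the estimate cannot be made uniform in $\varepsilon$.
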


\begin{proof} Let us first derive the straightforward energy estimate. For almost all $t\in(0,T)$, let $\varphi=u(t)$ in \eqref{sw1}.  Then\footnote{See e.g. \cite[p. 153]{book} on how $\frac 12$ appears in \eqref{ee1}.} 
\be\label{ee1} \frac 12 \frac {d} {d t} (u, u) +  \delta
   (g(v)\nabla u, \nabla u) +\varepsilon (u,u)_2 = 0.
   \end{equation} Integration in time gives     
   \be\label{ee2} \frac 12 \|u(t)\|^2 + \int\limits_0^t  
   (\delta g(v(s))\nabla u(s), \nabla u(s))\,ds \leq  \frac \delta 2 \|u_0\|^2.
   \end{equation}  

Observe now that \be\label{sz1} \frac {d} {d t} (\zeta, \varphi) +  \delta
   (g(\theta)\nabla \zeta, \nabla \varphi) +(E_{1}(\zeta,\theta,\delta),\varphi)+\varepsilon (\zeta,\varphi)_2 = \varepsilon (\zeta,\varphi)_2,
   \end{equation}
   and
    \be \label{sz2} \frac {d} {d t} (\theta, \phi) +  ({\lambda} \nabla \theta, \nabla \phi)+\delta ( \nabla \theta, \phi \nabla {\lambda})$$ $$- \delta\left((1-\lambda)(|\nabla \zeta|-\theta), \phi\right) + (E_{2}(\zeta,\theta,\delta),\phi)=0.\end{equation} 
      for
 $ \varphi\in V_2, \ \phi\in H^1_0$. 
Denote $w=u-\zeta$ and $\varsigma=v-\theta$. For almost all $t\in(0,T)$, put $\varphi=w(t)$ and $\phi=\varsigma(t)$. Add the difference between \eqref{sw1} and \eqref{sz1} with the difference between \eqref{sw2} and \eqref{sz2}, arriving at

\be \label{ins1} \frac 12 \frac {d} {d t} (w, w) +\frac 1 2 \frac {d} {d t} (\varsigma, \varsigma)+\delta (g(v)\nabla w, \nabla w)  $$ $$+\varepsilon (w,w)_2+({\lambda} \nabla \varsigma, \nabla \varsigma)+\delta\left((1-\lambda) \varsigma, \varsigma\right)$$ $$=-\delta ([g(v)-g(\theta)]\nabla \zeta, \nabla w)+\delta\left((1-\lambda)(|\nabla u|-|\nabla \zeta|), \varsigma\right)-\delta( \nabla \varsigma, \varsigma \nabla {\lambda})$$ $$+(E_{1}(\zeta,\theta,\delta),w)+ (E_{2}(\zeta,\theta,\delta),\varsigma)-\varepsilon (\zeta,w)_2.\ee

Let us estimate the first three terms in the right-hand side. 
\be-\delta ([g(v)-g(\theta)]\nabla \zeta, \nabla w)+\delta\left((1-\lambda)(|\nabla u|-|\nabla \zeta|), \varsigma\right)$$ $$\leq C(\zeta,g)\delta (|v-\theta|, |\nabla w|) $$ $$\leq C(\zeta,g)\left(\frac {|\varsigma|}{\sqrt{g(v)}}, \sqrt{\delta g(v)}|\nabla w|\right)$$ $$=C(\zeta,g)\left[\left(\frac {|\varsigma|}{\sqrt{g(0)}}, \sqrt{\delta g(v)}|\nabla w|\right)+\left(|\varsigma|\left(\frac {1}{\sqrt{g(\theta)}}-\frac {1}{\sqrt{g(0)}}\right), \sqrt{\delta g(v)}|\nabla w|\right)\right]$$ $$+ C(\zeta,g)\left(|\varsigma|\left(\frac {1}{\sqrt{g(v)}}-\frac {1}{\sqrt{g(\theta)}}\right), \sqrt{\delta g(v)}|\nabla w|\right)$$ $$\leq C(\zeta,\theta,g)\left(|\varsigma|, \sqrt{\delta g(v)}|\nabla w|\right)+C(\zeta,g)\left(\varsigma^2, \sqrt{\delta g(v)}(|\nabla \zeta|+|\nabla u|)\right)$$ $$\leq \|\sqrt{\delta g(v)}\nabla w\|^2+ C(\zeta,\theta,g)\|\varsigma\|^2+C(\zeta,g)\left(\varsigma^2, \sqrt{\delta g(v)}|\nabla u|\right),\ee
and \be-\delta( \nabla \varsigma, \varsigma \nabla {\lambda})\leq C(\lambda) (\varsigma, \nabla \varsigma)\leq \frac {\lambda_0} 4 \|\varsigma\|^2_1+C(\lambda) \|\varsigma\|^2 \ee

Now, \eqref{ins1} implies 

\be \label{ins2} \frac 12 \frac {d} {d t} (w, w) +\frac 1 2 \frac {d} {d t} (\varsigma, \varsigma) +\varepsilon (w,w)_2+\frac {3 \lambda_0} 4 \|\varsigma\|^2_1 $$ $$\leq  C(\zeta,\theta,\lambda,g)\left(\varsigma^2, 1+\sqrt{\delta g(v)}|\nabla u|\right)$$ $$+(E_{1}(\zeta,\theta,\delta),w)+ (E_{2}(\zeta,\theta,\delta),\varsigma)-\varepsilon (\zeta,w)_2.\ee
Denote $\Phi(t)=\left\|1+\sqrt{\delta g(v(t))}|\nabla u(t)|\right\|$. Due to \eqref{lady}, 
\be \label{ins2} \frac {d} {d t} (w, w) +\frac {d} {d t} (\varsigma, \varsigma) +2\varepsilon (w,w)_2+\frac {3 \lambda_0} 2 \|\nabla\varsigma\|^2 $$ $$\leq  C(\zeta,\theta,\lambda,g)\Phi \|\varsigma\|\|\nabla\varsigma\| $$ $$+2(E_{1}(\zeta,\theta,\delta),w)+ 2(E_{2}(\zeta,\theta,\delta),\varsigma)-2\varepsilon (\zeta,w)_2. \ee
Thus,
\be \label{ins3} \frac {d} {d t} \|w\|^2 +\frac {d} {d t} \|\varsigma\|^2 +2\varepsilon \|w\|^2_2+\lambda_0 \|\nabla\varsigma\|^2 $$ $$\leq  C(\zeta,\theta,\lambda,g)\Phi^2 \|\varsigma\|^2 $$ $$+2(E_{1}(\zeta,\theta,\delta),w)+ 2(E_{2}(\zeta,\theta,\delta),\varsigma)-2\varepsilon (\zeta,w)_2. \ee

We now require two estimates for $\Phi$,
\be\label{phi1}\int\limits_0^t  \Phi^2(s) ds= \int\limits_0^t  \int\limits_\Omega^{} [1+\sqrt{\delta g(v(s))}|\nabla u(s)|]^2\,dx \, ds$$ $$\leq 2 \int\limits_0^t  \int\limits_\Omega^{}\,dx \, ds+ 2\int\limits_0^t\int\limits_\Omega^{} \delta g(v(s))|\nabla u(s)|^2\,dx \, ds$$ $$\leq 2t|\Omega|+\delta \|u_0\|^2-\|u(t)\|^2, \ee
by virtue of \eqref{ee2}, and
\be\label{phi2}\int\limits_0^t  \Phi^2(s) ds\geq \int\limits_0^t  \int\limits_\Omega^{}\,dx \, ds=t|\Omega|. \ee

With the help of Lemma \ref{ineq}, we derive from \eqref{ins3}-- \eqref{phi2} that 
\be \label{bine}\|w(t)\|^2 +\|\varsigma(t)\|^2+ 2\varepsilon \int\limits_0^t
\|w(s)\|^2_2 \, ds + \lambda_0 \int\limits_0^t  \|\nabla\varsigma(s)\|^2\, ds $$ $$\leq \exp\left(C(\zeta,\theta,\lambda,g)\int\limits_0^t  \Phi^2(s) ds\right)\Big\{\|w(0)\|^2 +\|\varsigma(0)\|^2+ $$ $$
\int\limits_0^t \exp\left(C(\zeta,\theta,\lambda,g)\int\limits_s^0  \Phi^2(\xi) d\xi\right)[ 2(E_{1}(\zeta,\theta,\delta)(s),w(s))$$ $$ + 2(E_{2}(\zeta,\theta,\delta)(s),\varsigma(s))-2\varepsilon (\zeta(s),w(s))_2]\, ds
\Big\}$$ $$
\leq \exp\left(C(\zeta,\theta,\lambda,g)(2t|\Omega|+\delta \|u_0\|^2-\|u(t)\|^2)\right)\Big\{\|w(0)\|^2 +\|\varsigma(0)\|^2+ $$ $$
\int\limits_0^t \exp\left(-C(\zeta,\theta,\lambda,g)s|\Omega|\right)\big|2(E_{1}(\zeta,\theta,\delta)(s),w(s))$$ $$+ 2(E_{2}(\zeta,\theta,\delta)(s),\varsigma(s))-2\varepsilon (\zeta(s),w(s))_2\big|\, ds
\Big\}$$
$$
\leq \exp\left(C(\zeta,\theta,\lambda,g)(|\Omega|+1)(2t+\delta \|u_0\|^2-\|u(t)\|^2)\right)\Big\{\|w(0)\|^2 +\|\varsigma(0)\|^2+ $$  $$
\int\limits_0^t \exp\left(-C(\zeta,\theta,\lambda,g)s(|\Omega|+1)\right)\big| 2(E_{1}(\zeta,\theta,\delta)(s),w(s))$$ $$+ 2(E_{2}(\zeta,\theta,\delta)(s),\varsigma(s))-2\varepsilon (\zeta(s),w(s))_2\big|\, ds
\Big\},\ee since $s\leq 2t$. Now \eqref{bine} yields \eqref{disdels} with $$\gamma=\exp\{C(\zeta,\theta,\lambda,g)(|\Omega|+1)\}.$$
\end{proof}

\begin{lemma} \label{leaes} Let $(u,v)$ be a weak solution to problem \eqref{au1s} -- \eqref{au6s}. The following estimates are valid: \be \label{ae1s}
\|u\|_{L_\infty(0,T;L_2)}+\|v\|_{L_\infty(0,T;L_2)}+\|v\|_{L_2(0,T;H^1_0)}\leq C,\ee 
\be \label{ae2s}
\|u\|_{L_2(0,T;V_2)}\leq  \frac {C}{\sqrt{\varepsilon}},\ee
\be \label{ae5s}
\|\nabla u\|_{L_2(0,T;L_1)}+\|\nabla u\|_{L_1(0,T;L_r)}+\|\nabla u\|_{L_{4/3}(0,T;L_{-\epsilon+4/3})}\leq C,\ee $$\ 1<r<2, 0<\epsilon<\frac 13,$$
\be\label{ae3s} \|u'\|_{L_2(0,T;V_2^*)}+\|v'\|_{L_2(0,T;H^{-2})}\leq (1+
\sqrt{\varepsilon})C,\ee \be\label{ae4s}
\|v'\|_{L_2(0,T;H^{-1})}\leq  (1+1/\sqrt{\varepsilon})C.\ee   The constants $C=C(T,\|u_0\|,\|v_0\|,\lambda,g,\Omega)$ are
independent of $\varepsilon$ and $\delta$.
\end{lemma}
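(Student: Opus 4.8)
The plan is to read off \eqref{ae1s}--\eqref{ae2s} directly from Lemma \ref{diins}, to derive \eqref{ae5s} from the basic energy identity together with the growth estimate \eqref{estg} and two-dimensional interpolation, and to obtain \eqref{ae3s}--\eqref{ae4s} by duality from the weak formulation \eqref{sw1}--\eqref{sw2}.

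First, apply Lemma \ref{diins} with the admissible test pair $(\zeta,\theta)=(0,0)\in\mathcal R$. Then $E_1(0,0,\delta)=E_2(0,0,\delta)=0$, the term $\varepsilon(\zeta,u-\zeta)_2$ vanishes, and since $\gamma>1$ one has $\gamma^{\|u(t)\|^2}\ge 1$; using also $\delta\le 1$, inequality \eqref{disdels} collapses, for every $t\in[0,T]$, to
\[\|u(t)\|^2+\|v(t)\|^2+2\varepsilon\int_0^t\|u(s)\|_2^2\,ds+\lambda_0\int_0^t\|v(s)\|_1^2\,ds\le \gamma^{2T+\|u_0\|^2}\bigl(\|u_0\|^2+\|v_0\|^2\bigr).\]
Tracking the proof of Lemma \ref{diins}, with $\zeta=\theta=0$ the constant $C(\zeta,\theta,\lambda,g)$ there depends only on $\lambda$ and $g$, so $\gamma=\gamma(\Omega,\lambda,g)$ and the right-hand side is a constant of the type $C(T,\|u_0\|,\|v_0\|,\lambda,g,\Omega)$, \emph{independent of $\varepsilon$ and $\delta$} (for $\delta=0$ the right-hand side is $0$ and the solution is identically zero, so there is nothing to prove). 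Taking the supremum in $t$ gives \eqref{ae1s}, and dividing the $\varepsilon$-term by $2\varepsilon$ gives \eqref{ae2s}.

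Next, testing \eqref{sw1} with $\varphi=u(s)$ and integrating in time (as in \eqref{ee1}--\eqref{ee2}) yields $\int_0^T\!\int_\Omega g(v)|\nabla u|^2\,dx\,ds\le\tfrac12\|u_0\|^2$ for $\delta>0$. By \eqref{estg}, $\int_\Omega g(v(s))^{-1}\,dx\le 2C(g)^2(|\Omega|+\|v(s)\|^2)$, which is bounded by \eqref{ae1s}; hence Cauchy--Schwarz in $x$, $\|\nabla u(s)\|_{L_1}^2\le\bigl(\int_\Omega g(v)|\nabla u|^2\bigr)\bigl(\int_\Omega g(v)^{-1}\bigr)$, and integration in $s$ give the $L_2(0,T;L_1)$ bound in \eqref{ae5s}. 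For the other two, write $|\nabla u|=\tfrac{|\nabla u|}{1+|v|}(1+|v|)$ and use Hölder in $x$; by \eqref{estg} and the energy bound $\tfrac{\nabla u}{1+|v|}\in L_2(0,T;L_2)$, so it remains to place $1+|v|$ in a suitable $L_\sigma(0,T;L_p)$ space. The $L_1(0,T;L_r)$ bound, $1<r<2$, follows after a further Hölder in $t$ once $v\in L_2(0,T;L_{2r/(2-r)})$, and the $L_{4/3}(0,T;L_{4/3-\epsilon})$ bound, $0<\epsilon<\tfrac13$, follows after Hölder in $t$ with exponents $\tfrac32$ and $3$ once $v\in L_4(0,T;L_q)$ with $q=\tfrac{8/3-2\epsilon}{2/3+\epsilon}$ — and $q<4$ precisely because $\epsilon>0$. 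Both memberships are consequences of \eqref{ae1s} via the two-dimensional Gagliardo--Nirenberg interpolation $L_\infty(0,T;L_2)\cap L_2(0,T;H^1_0)\hookrightarrow L_{2p/(p-2)}(0,T;L_p)$, $2\le p<\infty$. Finally, for the time derivatives, \eqref{sw1} gives $|\langle u'(s),\varphi\rangle|\le\varepsilon\|u(s)\|_2\|\varphi\|_2+C(g)\|\sqrt{g(v(s))}\,\nabla u(s)\|\,\|\varphi\|_2$ (using boundedness of $g$, $\delta\le1$ and $\|\nabla\varphi\|\le\|\varphi\|_2$), so squaring, integrating and using \eqref{ae2s} and the energy bound gives $\|u'\|_{L_2(0,T;V_2^*)}\le(1+\sqrt\varepsilon)C$; from \eqref{sw2}, the two-dimensional embeddings $H^2\hookrightarrow L_\infty$, $H^1_0\hookrightarrow L_p$ ($p<\infty$) and Friedrichs' inequality yield $\|v'(s)\|_{H^{-2}}\le C(\|\nabla v(s)\|+\|v(s)\|+\|\nabla u(s)\|_{L_1})$ and $\|v'(s)\|_{H^{-1}}\le C(\|\nabla v(s)\|+\|v(s)\|+\|u(s)\|_2)$ (in the latter the pairing $((1-\lambda)|\nabla u|,\phi)$ is estimated by $\|\nabla u\|_{L_2}\|\phi\|_{L_2}\le\|u\|_2\|\phi\|_{L_2}$), so squaring and integrating gives the $H^{-2}$ part of \eqref{ae3s} from \eqref{ae1s} and \eqref{ae5s} and gives \eqref{ae4s} from \eqref{ae1s} and \eqref{ae2s}.

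\textbf{Main obstacle.} The genuinely delicate point is \eqref{ae1s}--\eqref{ae2s}: the coupling term $\delta((1-\lambda)|\nabla u|,v)$ in the energy identity for $v$ is superlinear in $v$ — because $1/g$ may grow quadratically — and is only borderline controllable; taming it is exactly where the Lipschitz property of $1/\sqrt g$ and the Ladyzhenskaya inequality \eqref{lady} enter. This is precisely the computation already carried out in the proof of Lemma \ref{diins}, which is why the cleanest route is to read \eqref{ae1s}--\eqref{ae2s} off that lemma rather than repeat it. Everything else is bookkeeping; the only care needed is to keep the constants in \eqref{ae1s} and \eqref{ae5s} independent of $\varepsilon$ while accepting the $\varepsilon^{\pm 1/2}$ losses dictated by \eqref{ae2s}, \eqref{ae3s}, \eqref{ae4s}.
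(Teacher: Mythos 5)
Your proposal is correct and follows essentially the same route as the paper: \eqref{ae1s}--\eqref{ae2s} from Lemma \ref{diins} with $\zeta\equiv\theta\equiv 0$, \eqref{ae5s} from the energy identity \eqref{ee2} combined with \eqref{estg} and H\"older/Sobolev, and \eqref{ae3s}--\eqref{ae4s} by duality from \eqref{sw1}--\eqref{sw2}. The only (harmless) deviation is the third bound in \eqref{ae5s}, which the paper gets by a time--space H\"older interpolation between the two previously established bounds on $\nabla u$, whereas you rederive it from the decomposition $|\nabla u|=\frac{|\nabla u|}{1+|v|}(1+|v|)$ together with the parabolic interpolation $L_\infty(0,T;L_2)\cap L_2(0,T;H^1_0)\hookrightarrow L_{2p/(p-2)}(0,T;L_p)$ for $v$; your explicit treatment of the $\delta>0$ versus $\delta=0$ dichotomy is in fact slightly more careful than the paper's.
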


\begin{proof} The estimates \eqref{ae1s} and \eqref{ae2s} are direct consequences of \eqref{disdels} with $\zeta \equiv \theta\equiv 0$. 

Then, using \eqref{estg} and \eqref{ee2}, we have \be\|\nabla u\|_{L_2(0,T;L_1)}\leq $$  $$ \|\sqrt{\delta g(v)}\nabla u\|_{L_2(0,T;L_2)}\|1/\sqrt{g(v)}\|_{L_\infty(0,T;L_2)} $$ $$\leq C\|1+|v|\|_{L_\infty(0,T;L_2)}\leq C, \ee and, since $H^1_0\subset L_p$ for any $p<\infty$ by Sobolev embedding,
\be\|\nabla u\|_{L_1(0,T;L_r)}\leq $$  $$ \|\sqrt{\delta g(v)}\nabla u\|_{L_2(0,T;L_2)}\|1+|v|\|_{L_2(0,T;L_{2r/(2-r)})} $$ $$\leq C (1+\|v\|_{L_2(0,T;H^1_0)}) \leq C. \ee
By the time-space H\"{o}lder inequality \cite[Lemma 2.2.1(b)]{book}, \be\|\nabla u\|_{L_{4/3}(0,T;L_{-\epsilon+4/3})}\leq \||\nabla u|^{1/2}\|_{L_{4}(0,T;L_{2})} \||\nabla u|^{1/2}\|_{L_{2}(0,T;L_{\frac {8-6\epsilon}{2+3\epsilon}})}$$ $$\leq \sqrt{\|\nabla u\|_{L_{2}(0,T;L_{1})} \|\nabla u\|_{L_{1}(0,T;L_{\frac {4-3\epsilon}{2+3\epsilon}})}}\leq C.\ee

It
remains to estimate the time derivatives, expressing them from \eqref{sw1} and \eqref{sw2}. Utilizing \eqref{ee2}, we get
 \be\label{sdert}\|\langle u', \varphi \rangle\|_{L_2(0,T)}\leq \delta \|(g(v)\nabla u, \nabla \varphi)\|_{L_2(0,T)}
  +  \varepsilon \|(u,\varphi)_2\|_{L_2(0,T)}$$ $$\leq \|\sqrt{\delta g(v)}\|_{L_\infty(0,T;L_\infty)}\|\sqrt{\delta g(v)}\nabla u\|_{L_2(0,T;L_2)} \|\nabla \varphi\|+\sqrt\varepsilon \sqrt\varepsilon \|u\|_{L_2(0,T;V_2)}\|\varphi\|_2 $$ $$ \leq C(1+
\sqrt{\varepsilon})\|\varphi\|_2, \ee
and 
   \be\label{sderts}\|\langle v', \phi \rangle\|_{L_2(0,T)}\leq \|({\lambda}\nabla v, \nabla \phi)\|_{L_2(0,T)}+ \delta\|(\nabla v, \phi \nabla {\lambda})\|_{L_2(0,T)}$$ $$ +\delta\|\left((1-\lambda)v, \phi\right)\|_{L_2(0,T)}+ \delta\|\left((1-\lambda)|\nabla u|, \phi\right)\|_{L_2(0,T)}$$ $$\leq\|v\|_{L_2(0,T;H^1_0)}\|\phi\|_1+ C(\lambda)\|v\|_{L_2(0,T;H^1_0)}\|\phi\| $$ $$ +\|\nabla u\|_{L_2(0,T;L_1)}\|\phi\|_{L_\infty} \leq C\|\phi\|_2.\ee    
In order to get \eqref{ae4s}, it suffices to observe that \be\delta\|\left((1-\lambda)|\nabla u|, \phi\right)\|_{L_2(0,T)}$$ $$\leq \|\nabla u\|_{L_2(0,T;L_2)}\|\phi\|\leq C\|u\|_{L_2(0,T;V_2)}\|\phi\|_1\leq \frac C{\sqrt{\varepsilon}} \|\phi\|_1. \ee
 \end{proof}

\begin{lemma} \label{lews} Given $T>0$ and $u_0,v_0\in L_2$, there exists a weak solution to problem \eqref{au1s} -- \eqref{au6s} with $\delta=1$. \end{lemma}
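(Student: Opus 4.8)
The plan is to obtain the $\delta=1$ solution by a Leray--Schauder continuation in $\delta$, using the family \eqref{au1s}--\eqref{au6s} itself as the homotopy. Fix $\varepsilon>0$, $T>0$, and work in $X=L_2(0,T;H^1_0)\times L_2(0,T;L_2)$. Given $(\bar u,\bar v)\in X$ and $\delta\in[0,1]$, define $(u,v)=\mathcal{K}((\bar u,\bar v),\delta)$ by solving the two \emph{decoupled linear} parabolic problems: $u$ solves $u'+\varepsilon Au-\delta\,\divo(g(\bar v)\nabla u)=0$ with $u(0)=\delta u_0$, and $v$ solves \eqref{sw2} with $|\nabla u|$ replaced by the datum $|\nabla\bar u|\in L_2(0,T;L_2)$ and $v(0)=\delta v_0$. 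For the first problem the associated (measurable in $t$) bilinear form on $V_2$ is bounded and, since $g$ is positive, coercive with constant $\varepsilon$ uniformly in $\bar v$ and $\delta$; for the second, since $0\le 1-\lambda$ and $\lambda\ge\lambda_0$, the form on $H^1_0$ satisfies a G\aa rding inequality. Hence the standard existence theorem for linear parabolic equations (see \cite{book}) gives unique $u\in W_1$ with $\|u\|_{W_1}\le C(\varepsilon)\|u_0\|$ and $v\in W_2$ with $\|v\|_{W_2}\le C(\|v_0\|+\|\bar u\|_{L_2(0,T;H^1_0)})$; in particular $\mathcal{K}$ takes $X$ into $W_1\times W_2$ and bounded subsets of $X$ into bounded subsets of $W_1\times W_2$. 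Note that a fixed point $(u,v)=\mathcal{K}((u,v),\delta)$ is exactly a weak solution of \eqref{au1s}--\eqref{au6s} with parameter $\delta$.

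I would then verify the hypotheses of the Leray--Schauder theorem. \emph{Compactness:} the embeddings $W_1\hookrightarrow L_2(0,T;H^1_0)$ and $W_2\hookrightarrow L_2(0,T;L_2)$ are compact by the Aubin--Lions lemma (using that $V_2$ is compactly embedded in $H^1_0\subset L_2\subset V_2^*$ and $H^1_0$ compactly embedded in $L_2\subset H^{-1}$, which hold on $\Omega$ thanks to the cone property), so $\mathcal{K}$ sends bounded sets to precompact sets. \emph{Continuity:} if $(\bar u_n,\bar v_n)\to(\bar u,\bar v)$ in $X$, then along a subsequence $\bar v_n\to\bar v$ a.e., hence $g(\bar v_n)\to g(\bar v)$ a.e.\ and boundedly; testing the difference of the two $u$-equations with itself and using dominated convergence gives $u_n\to u$ in $L_2(0,T;V_2)$, while $|\nabla\bar u_n|\to|\nabla\bar u|$ in $L_2(0,T;L_2)$ (from $\big||a|-|b|\big|\le|a-b|$) gives $v_n\to v$ in $L_2(0,T;H^1_0)$; the usual ``every subsequence has a subsequence'' argument upgrades this to continuity of $\mathcal{K}$ on $X$, and joint continuity in $\delta$ follows the same way. \emph{Endpoint:} for $\delta=0$ both linear problems are homogeneous with zero initial data, so $\mathcal{K}(\cdot,0)\equiv(0,0)$. \emph{A priori bound:} since fixed points of $\mathcal{K}(\cdot,\delta)$ are weak solutions, Lemma \ref{leaes} (whose constants are independent of $\delta$) gives $\|u\|_{L_2(0,T;V_2)}\le C/\sqrt\varepsilon$ and $\|v\|_{L_2(0,T;H^1_0)}\le C$, hence $\|(u,v)\|_X\le R$ with $R$ independent of $\delta\in[0,1]$. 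The Leray--Schauder fixed point theorem then yields a fixed point of $\mathcal{K}(\cdot,1)$, i.e.\ a weak solution of \eqref{au1s}--\eqref{au6s} with $\delta=1$.

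The step I expect to be the main obstacle is the passage to the limit in the two genuinely nonlinear terms (here packaged as the continuity of $\mathcal{K}$, but the same issue arises in any approximation scheme): the scheme must be arranged so as to deliver \emph{strong} $L_2(0,T;L_2)$ convergence of $v$ (to pass to the limit in $g(v)$, via a.e.\ convergence and boundedness of $g$) and of $\nabla u$ (to pass to the limit in $|\nabla u|$). This is exactly where the regularizing term $\varepsilon Au$ is indispensable, since it is what forces $u\in L_2(0,T;V_2)$ and hence makes $\nabla u$ precompact in $L_2(0,T;L_2)$ at this stage. A related technical point: $\Omega$ is only assumed to have the cone property, so one must work with the operator $A$ defined above --- whose eigenfunctions automatically lie in $V_2$ and give $V_2$-bounded spectral projections --- rather than with Laplacian eigenfunctions, which need not be $H^2$-regular here. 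Equivalently, one may replace the Leray--Schauder argument by a Galerkin approximation in that same eigenbasis, the $\varepsilon$-dependent estimates of Lemma \ref{leaes} then being reproduced at the finite-dimensional level to give global solvability of the Galerkin ODE system, after which the same Aubin--Lions compactness passes to the limit.
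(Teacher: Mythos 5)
Your argument is correct and shares the paper's overall strategy (a fixed-point argument with the homotopy parameter $\delta$, closed by the $\delta$-independent a priori bounds of Lemma \ref{leaes}), but the compact map you build is genuinely different from the one in the paper. The paper recasts the weak formulation as $\tilde A(u,v)=\delta Q(u,v)$, where $\tilde A$ is the fixed, invertible linear part and $Q$ carries \emph{all} the nonlinearity evaluated at the unknown itself; compactness and continuity of $Q$ are then proved via Simon's compactness results (the embeddings $W_1\subset L_p(0,T;W^1_p)$ for some $p>2$ and $W_2\subset L_2(0,T;L_2)$ are compact) together with Krasnoselskii's theorem for the Nemytskii operator $v\mapsto g(v)$, and since the homotopy has the scalar form $(u,v)=\delta\,\tilde A^{-1}Q(u,v)$, Schaeffer's theorem applies directly. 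You instead freeze the coefficients, solve two decoupled linear parabolic problems with data $(\bar u,\bar v)$, and get compactness from $W_1\times W_2\hookrightarrow X$ and continuity from dominated convergence in the frozen coefficient $g(\bar v_n)$ and in the forcing $|\nabla\bar u_n|$; because $\delta$ then enters the structure of the equations rather than as a scalar multiple of the whole map, you correctly invoke the general Leray--Schauder continuation theorem rather than Schaeffer. The trade-off: your route avoids analyzing the product $g(v)\nabla u$ as a nonlinear operator (no need for Krasnoselskii or the $p>2$ refinement of the Aubin--Lions--Simon embedding), at the cost of a slightly more delicate continuity proof for the solution operator and of needing the continuation form of the fixed-point theorem; the paper's route keeps the fixed-point map in the simplest possible form at the cost of a more involved compactness verification. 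Both hinge on the same two points you correctly identify as essential: the $\varepsilon A$ regularization (which puts $u$ in $L_2(0,T;V_2)$ and makes $\nabla u$ compact in $L_2$) and the uniformity in $\delta$ of the estimates in Lemma \ref{leaes}.
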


\begin{proof}  Let us rewrite the weak statement of \eqref{au1s} -- \eqref{au6s} in the suitable operator form \begin{equation} \label{opeq} \tilde {A} (u, v) = \delta Q (u, v). \end{equation} The operators
$\tilde {A},Q:W_1\times W_2\to L_2 (0, T; V^{*}_2)\times L_2 (0, T; H^{-1})
\times L_2\times L_2$ are determined by the formulas
$$\langle\tilde{A} (u,v), (\varphi,\phi)\rangle$$ $$= \Big(\frac {d} {d t} (u, \varphi)   +  \varepsilon (u,\varphi)_2, \frac {d} {d t} (v, \phi) +  ({\lambda}\nabla v, \nabla \phi), u | _ {t=0}, v | _ {t=0}\Big),
$$

$$\langle Q(u,v), (\varphi,\phi)\rangle$$ $$=\Big(-(g(v)\nabla u, \nabla \varphi), -(\nabla v, \phi \nabla {\lambda}) + \left((1-\lambda)(|\nabla u|-v), \phi\right), u_0, v_0\Big).$$
Here $\varphi\in V_2$ and $\phi\in H^1_0$ are test functions. 

The operator $Q$ is continuous and compact. Here we only explain this claim for its first component, and for the others the proof is more straightforward. We observe first that the embedding $W_1\subset L_p (0, T; W^1_p)$ is compact for some $p>2$. This can be shown using \cite[Corollary 8]{sim}. The embedding $W_2 \subset L_2 (0, T; L_2)$ is compact by \cite[Corollary 4]{sim}. Let $(u_m,v_m)\rightharpoonup (u_0,v_0)$ be a weakly converging sequence in $W_1\times W_2$. Then $(u_m,v_m)$ is strongly converging in  $ L_p (0, T; W^1_p)\times L_2 (0, T; L_2)$. By Krasnoselskii's theorem \cite[Theorem 2.1]{krasn}, $g(v_m)\to g(v_0)$ in $L_q (0, T; L_q)$ for any $q<+\infty$. Thus, $g(v_m)\nabla u_m\to g(v_0)\nabla u_0$ in $L_2 (0, T; L_2)$, and the claim follows. 
 
The linear operator $\tilde{A}$ is continuous by \cite[Corollary 2.2.3]{book} and invertible by \cite[Lemma 3.1.3]{book}. Thus, \eqref{opeq} can be rewritten as \begin{equation} \label{opeq} (u, v) = \delta \tilde{A}^{-1}Q (u, v) \end{equation}
 in the space $W_1\times W_2$.
 
Lemma \ref{leaes} yields the a priori estimate \be \label{aes}
\|u\|_{W_1}+ \|v\|_{W_2}\leq C,\ee  where $C$ may depend on $\varepsilon$ but does not depend on $\delta$. By Schaeffer's theorem \cite[p. 539]{evans}, there exists a fixed point of the map $\tilde{A}^{-1}Q $, which is the required solution.  
\end{proof}

We will also need the following simple fact. 

\begin{prop} \label{prok} Let $G$ be a measurable set in a finite-dimensional space, $\chi:\R\to \R$ be a continuous function, and let $y_m:G\to \R$ be a sequence of functions. Assume that $\{y_m\}$ is uniformly bounded in $L_\infty(G)$, and $y_m\to y_0$ in $L_q(G)$, $q\geq 1$. Then $\chi(y_m)\to \chi(y_0)$ in  $L_p(G)$ for any $p<\infty$.  \end{prop}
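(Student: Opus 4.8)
The plan is to avoid any functional-analytic compactness machinery and argue directly, splitting $G$ according to the size of $|y_m-y_0|$ and exploiting that the uniform $L_\infty$-bound confines all the relevant values to one fixed compact interval on which $\chi$ is bounded and uniformly continuous. Concretely, set $M:=\sup_m\|y_m\|_{L_\infty(G)}$, which is finite by hypothesis. Extracting an a.e.-convergent subsequence from the $L_q$-convergent sequence $y_m\to y_0$ shows that $|y_0|\le M$ a.e.\ on $G$ (this is a property of $y_0$ alone, so it then holds unconditionally); hence $y_m$ and $y_0$ take values in $[-M,M]$ a.e. On this interval $\chi$ is bounded, say by $K:=\max_{[-M,M]}|\chi|$, and uniformly continuous.

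First I would fix $\epsilon>0$ and pick, by uniform continuity, $\delta>0$ such that $|\chi(s)-\chi(t)|\le\epsilon$ for all $s,t\in[-M,M]$ with $|s-t|\le\delta$, while of course $|\chi(s)-\chi(t)|\le 2K$ throughout $[-M,M]$. Splitting the domain of integration into $\{|y_m-y_0|\le\delta\}$ and its complement gives
\[
\int_G|\chi(y_m)-\chi(y_0)|^p\,dx\ \le\ \epsilon^p\,|G|+(2K)^p\,\big|\{x\in G:|y_m(x)-y_0(x)|>\delta\}\big|.
\]
Chebyshev's inequality bounds the last measure by $\delta^{-q}\|y_m-y_0\|_{L_q(G)}^q$, which tends to $0$ as $m\to\infty$; hence $\limsup_m\|\chi(y_m)-\chi(y_0)\|_{L_p(G)}^p\le\epsilon^p|G|$, and letting $\epsilon\to 0$ finishes the proof. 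I prefer this direct route to the alternative of invoking dominated convergence with the constant majorant $(2K)^p$: that version produces convergence only along an a.e.-convergent subsequence and then needs the ``every subsequence admits a convergent sub-subsequence'' principle to recover full convergence, whereas convergence in measure --- which is all that $L_q$-convergence hands us for free --- plugs straight into the splitting above.

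The only point that genuinely needs attention is that the term $\epsilon^p|G|$ be finite, i.e.\ that $G$ have finite measure; this is the case in every application of the proposition in this paper, where $G$ is a bounded space-time cylinder, so it requires no comment beyond this remark. Apart from that minor bookkeeping I do not expect any real obstacle: the content of the proposition is entirely soft, namely that uniform continuity on a compact interval together with convergence in measure forces convergence of the compositions in every $L_p$, $p<\infty$, and in particular no monotonicity, convexity, Lipschitz bound or differentiability of $\chi$ is used anywhere.
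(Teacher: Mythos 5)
Your proof is correct, and it takes a genuinely different route from the paper. The paper's own argument is a two-line reduction: since the $y_m$ are uniformly bounded, one may modify $\chi$ outside the range interval so that it becomes a bounded continuous function without changing any of the compositions $\chi(y_m)$, and then one invokes Krasnoselskii's theorem on the continuity of the superposition (Nemytskii) operator from $L_q$ to $L_p$. You instead give a self-contained elementary proof: the uniform $L_\infty$-bound confines everything to a compact interval on which $\chi$ is uniformly continuous and bounded, $L_q$-convergence gives convergence in measure via Chebyshev, and the $\{|y_m-y_0|\le\delta\}$ versus $\{|y_m-y_0|>\delta\}$ splitting does the rest. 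What the paper's route buys is brevity at the cost of an external citation; what yours buys is transparency and independence from the Nemytskii-operator literature, and it makes visible the one hypothesis that both arguments silently use, namely $|G|<\infty$ (your term $\epsilon^p|G|$, and equally the applicability of Krasnoselskii's theorem with a merely bounded $\chi$, require it). That hypothesis is not stated in the proposition but holds in every application in the paper, where $G$ is a bounded space--time cylinder, so your explicit remark on it is a small but genuine improvement in rigour rather than a defect. Your aside about why you prefer convergence in measure over the dominated-convergence-plus-subsequence argument is also sound.
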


\begin{proof} Due to the uniform boundedness of $\{y_m\}$, without loss of generality we may assume that $\chi$ is also bounded, and then it suffices to apply \cite[Theorem 2.1]{krasn}. \end{proof}

Based on the obtained lemmas, we can proceed with the sketch of the \textbf{proof of Theorem \ref{mais}}. We refer to \cite{NON11} for the details of the technique, and mainly focus on the new issues. To prove a) and b), one passes to the limit in \eqref{disdels} with $\delta=1$ as $\varepsilon=\varepsilon_m\to 0$ on every interval $(0,T)$, $T>0$. However, unlike in \cite{blions,anwu,diss1,NON11}, in view of the presence of the absolute value in the right-hand member of \eqref{disdels}, it is not possible to do it via weak and weak-* compactness.

Let $(u_m,v_m)$ be the weak solution to problem
\eqref{au1s} -- \eqref{au6s} with $\varepsilon=\varepsilon_m$. Lemma \ref{leaes}, \cite[Corollary 4]{sim} and the compact Sobolev embedding $W_{-\epsilon+4/3}^1\subset L_2$ imply that without loss of generality
$u_m\to u$ in $L_{4/3}(0,T;L_2)$, $v_m\to v$ in $L_2(0,T;L_2)$. Then, by \eqref{ae1s} and Proposition \ref{prok}, $$\gamma^{\|u_m(t)\|^2}\to\gamma^{\|u(t)\|^2}$$ in $L_2(0,T)$. Furthermore, by the same proposition, $\|u_m(t)-\zeta(t)\|^2 \to \|u(t)-\zeta(t)\|^2$, $\|v_m(t)-\theta(t)\|^2 \to\|v(t)-\theta(t)\|^2$ in $L_2(0,T)$. Therefore
\be\gamma^{\|u_m(t)\|^2}\big\{\|u_m(t)-\zeta(t)\|^2 +
\|v_m(t)-\theta(t)\|^2 
\big\}$$ $$\to\gamma^{\|u(t)\|^2}\big\{\|u(t)-\zeta(t)\|^2 +
\|v(t)-\theta(t)\|^2 
\big\}\ee in $L_1(0,T)$. Note that \be\theta\in L_4(0,T;H^1)\subset L_\infty(0,T;L_2)\cap L_2(0,T;H^2).\ee This yields $E_{1}(\zeta,\theta)$ $\in L_{4}(0,T;L_2)$. Remember that $E_{2}(\zeta,\theta)\in L_{2}(0,T;L_2)$. Thus, we can pass to the limit in the right-hand side of \eqref{disdels} as well; the last summand (the one with $\varepsilon$) goes to zero due to \eqref{ae2s}. 

To get c), one lets $\zeta=u_T$,
$\theta=v_T$ in \eqref{dss} for $t\in(0,T)$, and then the right-hand member of \eqref{dss} vanishes there. And e) is obtained by putting $t=0$ in \eqref{dss} and applying a density argument. Finally, d) is a consequence of a), e)
and c).


\bibliography{stu}

\bibliographystyle{abbrv}
\end{document}